\newcommand{\perpb}{\perp_B}
\newcommand{\perpr}{\perp_R}
\newcommand{\C}{\mathbb{C}}
\newcommand{\A}{\mathcal{A}}
\theoremstyle{theorem}
\newtheorem{theorem}{Theorem}[section]
\newtheorem{cor}[theorem]{Corollary}
\newtheorem{prop}[theorem]{Proposition}
\newtheorem{lemma}[theorem]{Lemma}
\theoremstyle{remark}
\newtheorem{example}[theorem]{Example}
\date{\today}
\title[Roberts orthogonality and the Davis--Wielandt shell]{Roberts orthogonality and the Davis--Wielandt shell}
\begin{document}

\begin{abstract} Let $\mathcal A$ be a unital $C^*$-algebra with the unit $e$. We consider the elements $a\in\A$ which are Roberts orthogonal to the unit $e$.  We obtain a characterization of this orthogonality in terms of the Davis--Wielandt shell of $a$ and show that, for certain classes of elements of $\A,$ the Roberts orthogonality of $a$ and $e$ is equivalent to the symmetry of the numerical range of $a$ with respect to the origin.
\end{abstract}

\author[Lj. Aramba\v si\' c]{Ljiljana Aramba\v si\' c$^1$}
\author[T. Beri\' c]{Tomislav Beri\' c$^1$}
\address{$^1$ Department of Mathematics, Faculty of Science, University of Zagreb,
Bijeni\v cka cesta 30, 10000 Zagreb, Croatia.}
\email{arambas@math.hr}
\email{tberic@math.hr}
\author[R. Raji\' c] {Rajna Raji\' c $^*$ $^2$ }
\address{$^*$ Corresponding author,
$^2$ Faculty of Mining, Geology and Petroleum
Engineering, University of Zagreb, Pierottijeva 6, 10000 Zagreb,
Croatia}
\email{rajna.rajic@rgn.hr}

\subjclass[2010]{Primary 46L05, 47A12; Secondary 46B20, 47A30}


\keywords{Roberts orthogonality, numerical range, Davis--Wielandt shell, $C^*$-algebra}

\maketitle

\section{Introduction and Preliminaries}

Two elements of an inner product space are said to be orthogonal if their inner product is zero. There are many different ways how one can extend this notion to normed linear spaces (see e.g. \cite{A-B1,A-B2,B,J1,J2,J3,R}, see also \cite{AR2012,AR2014,AR2015,BMT,BS}).
One of them is the Roberts orthogonality \cite{R}: we say that two elements $x$ and $y$ of a complex normed linear space $X$ are \emph{Roberts orthogonal}, and we write $x\perp_R y,$ if
\begin{equation}\label{R}
    \|x+\lambda y\|=\|x-\lambda y\|,\quad\forall \lambda \in \C.
\end{equation}
In this paper, we study the special case of Roberts orthogonality; namely, we describe the case $a\perp_R e,$ where $a$ is an element of a unital $C^*$-algebra $\A$ and $e$ is its unit. It turns out that this orthogonality is strongly related to a certain geometrical property of the Davis--Wielandt shell of the element $a$ and, moreover, for certain classes of elements of $\mathcal{A},$ it can be completely described in terms of their numerical ranges.

\vspace{1ex}

Before stating our results, we introduce some notation and definitions we shall need in the sequel. When $\mathcal{S}$ is a subset of $\mathbb{C}^n,$ we denote by $ \overline{\mathcal{S}}$ the topological closure of $\mathcal{S},$ and by $\textup{conv}(\mathcal{S})$ the convex hull of the set $\mathcal{S}$. $\mathcal{A}$ denotes a unital $C^*$-algebra with the unit $e.$ For an element $a$ of $\mathcal{A},$ we denote by
$$\textup{Re}\,a=\frac{1}{2}(a+a^*), \quad \textup{Im}\,a=\frac{1}{2i}(a-a^*)$$
the \emph{real} and the \emph{imaginary part} of $a.$

By $\sigma(a)$ we denote the spectrum of $a.$ We say that $a$ is positive, and write $a\ge 0,$ when $a$ is a self-adjoint element whose spectrum is positive. By $\mathcal{A}'$ we denote the dual space of $\mathcal{A}.$  A \emph{positive linear functional} of $\mathcal{A}$ is a map $\varphi\in\mathcal{A}'$ such that $\varphi(a)\ge 0$ whenever $a\ge 0.$ The set of all \emph{states} of $\mathcal{A},$ that is, the set of all positive linear functionals of $\mathcal{A}$ of norm 1, is denoted by $S(\mathcal{A})$. The \emph{numerical range} of $a\in\mathcal{A}$ is defined as the set
$$V(a)=\{\varphi(a): \varphi\in S(\mathcal{A})\}.$$
It is well known that $V(a)$ is a convex compact set which contains $\sigma(a).$ If $a\in\mathcal{A}$ is normal, then $V(a)=\overline{\textup{conv}(\sigma(a))}$ (see \cite{SW}).

Let $B(H)$ be the $C^*$-algebra of all bounded linear maps on a complex Hilbert space $(H, (\cdot,\cdot))$. By $I$ we denote the identity operator on $H.$ Recall that the classical numerical range of $A\in B(H)$ is the set
$$W(A)=\{(Ax,x): x\in H, \|x\|=1\}.$$
It holds (see \cite{SW})
$$
V(A)=\overline{W(A)}, \quad \forall A\in B(H).
$$
The \emph{Davis--Wielandt shell of $A\in B(H)$} is defined as the set
$$DW(A)=\{\big((Ax,x), (A^*Ax,x)\big)  : x\in H, \|x\|=1\}.$$
Note that the projection of $DW(A)$ on the first coordinate is the set $W(A).$ Thus the Davis--Wielandt shell gives us more information about $A$ than $W(A).$
Identifying $\mathbb{C}\times \mathbb{R}$ with $\mathbb{R}^3$ we have
$$\begin{array}{rcl}
DW(A) & = & \{(\textup{Re}(Ax,x), \textup{Im}(Ax,x),(A^*Ax,x)):x\in H, \|x\|=1\}\\
& = &\{((\textup{Re}\,A)x,x), ((\textup{Im}\,A)x,x),(A^*Ax,x)):x\in H, \|x\|=1\},
\end{array}$$
which is a joint numerical range of self-adjoint operators $\textup{Re}\, A, \textup{Im}\, A$ and $A^*A,$ and this set is convex when $\textup{dim}\,H\ge 3$ (see \cite{YT}).

The \emph{Davis--Wielandt shell of $a\in \mathcal{A}$} can be defined as the set
$$DV(a)=\{(\varphi(a),\varphi(a^*a)): \varphi\in S(\mathcal{A})\}.$$
Observe that the Davis--Wielandt shell of $a$ is the joint numerical status of elements $a$ and $a^*a$   (see e.g. \cite{BW}).
Since $S(\mathcal{A})$ is a weak*-compact and convex subset of $\mathcal{A}'$ \cite[3.2.1]{P}, and the map $\varphi\mapsto(\varphi(a),\varphi(a^*a))$ is weak*-continuous on $\mathcal{A'},$ we conclude that $DV(a)$ is a compact convex subspace of $\mathbb{C}\times\mathbb R.$

It is known that the set of all states of a unital $C^*$-algebra $\mathcal{A}\subseteq B(H)$ is a weak*-closed convex hull of the set of all vector states of $\mathcal{A},$ i.e., the states of $\mathcal{A}$ of the form $T\mapsto (Tx,x)$ for some unit vector $x$ in $H.$ Thus, for $A\in \A,$ we have $DV(A)\subseteq \overline{\textup{conv}(DW(A))}.$ On the other hand, since for every unit vector $x\in H,$ the map $T\mapsto (Tx,x)$ is a state of $\A,$ it holds $DW(A)\subseteq DV(A).$ Then the convexity and compactness of $DV(A)$ imply $\overline{\textup{conv}(DW(A))}\subseteq DV(A).$ Hence
$$DV(A)= \overline{\textup{conv}(DW(A))}$$
and, when $H$ is at least three dimensional, it holds $DV(A)=\overline{DW(A)}.$ In particular, $DV(A)=DW(A)$ whenever $3\le \textup{dim}\,H<\infty.$

For general theory of $C^*$-algebras, see e.g. \cite{D,M,P}. For more results on numerical ranges, joint numerical ranges and Davis--Wielandt shells the reader is refereed to e.g. \cite{BD1,BD2,BW,D1,D2,Don,LP,LPS,SW}.


\section{Results}

Let us first recall another type of orthogonality in normed linear spaces. If $X$ is a normed linear space and $x,y\in X,$ we say that $x$ is \emph{Birkhoff--James orthogonal} to $y$ \cite{B,J1,J2,J3}, in short $x\perp_B y,$ if
$\|x\|\le \|x+\lambda y\|$ for all $\lambda\in \C.$ This orthogonality is not symmetric, that is, $x\perp_B y$ does not necessarily imply that $y\perp_B x.$

Obviously, the Roberts orthogonality implies the Birkhoff--James orthogonality: if $x\perp_R y$ then
$$2\|x\|=\|(x+\lambda y)+(x-\lambda y)\|\le \|x+\lambda y\|+\|x-\lambda y\|=2\|x+\lambda y\|$$
for all $\lambda\in \C,$ so $x\perp_{B} y.$ Since \eqref{R} is a symmetric relation, we also have that $y\perpb x.$

If $a$ and $b$ are elements of a $C^*$-algebra such that $a^*b=0,$ then
$$\|a+\lambda b\|^2=\|a^*a+|\lambda|^2 b^*b\|=\|a-\lambda b\|^2$$
for all $\lambda\in \C.$
Therefore, the Roberts orthogonality in $C^*$-algebras is between the "range" orthogonality and the Birkhoff--James orthogonality, i.e.,
\begin{equation}\label{odnosi}
a^*b=0\Rightarrow a\perpr b\Rightarrow (a\perpb b\ \&\  b\perpb a).
\end{equation}
The converses do not hold in general.

Recall that \cite[Theorem 2.7]{AR2012} for $a\in\A$ we have: $a\perp_B e$ if and only if there exists $\varphi\in S(\A)$ such that $\varphi(a^*a)=\|a\|^2$ and $\varphi(a)=0.$
In other words,
$$a\perp_B e \Leftrightarrow (0,\|a\|^2)\in DV(a).$$
Then, by \eqref{odnosi}, we have
\begin{equation}\label{nula}
a\perp_R e\Rightarrow (0,\|a\|^2)\in DV(a).
\end{equation}
In particular, $a\perp_R e$ implies $0\in V(a).$ In what follows we prove that a stronger statement holds; namely, 0 is the center of symmetry of $V(a).$


\begin{prop}\label{slikasimetricna}
Let $\A$ be a $C^*$-algebra with the unit $e,$ and $a\in \A.$ If $a\perp_R e,$ then $V(a)=-V(a).$
\end{prop}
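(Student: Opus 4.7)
The plan is to reduce the equality $V(a)=-V(a)$ to an identity of support functions. Since $V(a)$ is a compact convex subset of $\C\cong\R^2$, we have $V(a)=-V(a)$ if and only if
\[
\sup\{\Re(e^{-i\theta}z):z\in V(a)\}\;=\;\sup\{\Re(-e^{-i\theta}z):z\in V(a)\} \qquad \forall\,\theta\in[0,2\pi).
\]
Using $\Re(e^{-i\theta}\varphi(a))=\varphi(\Re(e^{-i\theta}a))$ for $\varphi\in S(\A)$, each side of this equality is the maximum of the numerical range of a scalar rotation of $a$, so the problem reduces to showing that the ``rightmost'' points of $V(e^{-i\theta}a)$ and of $V(-e^{-i\theta}a)$ coincide for every $\theta$.

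The key analytic input is the classical formula (Bohnenblust--Karlin, see e.g.\ \cite{BD1}), valid in any unital Banach algebra with unit of norm $1$ and in particular in $\A$:
\[
\lim_{t\to 0^+}\frac{\|e+tb\|-1}{t}\;=\;\sup\{\Re z:z\in V(b)\},\qquad b\in\A.
\]
I would apply this formula with $b=e^{-i\theta}a$ and with $b=-e^{-i\theta}a$, thereby rewriting both suprema above as one-sided derivatives at $t=0$ of the real functions $t\mapsto\|e+t e^{-i\theta}a\|$ and $t\mapsto\|e-t e^{-i\theta}a\|$.

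At this point, the Roberts orthogonality hypothesis closes the argument. Since the defining relation \eqref{R} is symmetric in its two arguments, $a\perpr e$ is equivalent to $e\perpr a$, so $\|e+\mu a\|=\|e-\mu a\|$ for every $\mu\in\C$. Specializing to $\mu=te^{-i\theta}$ with $t\in\R$, the two norm functions coincide on all of $\R$, and hence so do their right-derivatives at $0$. Combined with the displayed formula, this gives the desired support-function identity for every $\theta$, and therefore $V(a)=-V(a)$.

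The main obstacle is conceptual rather than computational: the Roberts orthogonality hypothesis is a \emph{global} equality of norms in every complex direction, whereas what is needed is a \emph{directional} symmetry of the numerical range. The Bohnenblust--Karlin formula is precisely the bridge that converts norm information on $\|e+tb\|$ into the support function of $V(b)$; once that tool is in place, everything else is a short calculation.
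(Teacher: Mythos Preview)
Your argument is correct and takes a genuinely shorter route than the paper's. Both proofs reduce the claim to showing that the support function of $V(a)$ is even, i.e., that
\[
\sup\Re V(e^{-i\theta}a)=\sup\Re V(-e^{-i\theta}a)\qquad\text{for every }\theta,
\]
since a compact convex set is determined by its support function. The difference is in how this identity is obtained. The paper works by hand: for each $n\in\mathbb N$ it picks states $\varphi_n,\psi_n$ attaining $\|a\pm ne\|^2$ (via \cite[Theorem~3.3.6]{M}), uses $a\perpr e$ to compare them, and extracts the equality $\sup\Re V(a)=-\inf\Re V(a)$ by a limiting argument as $n\to\infty$; it then rotates by $e^{-i\theta}$ and finishes with a separate convex-geometry step passing from symmetric projections to symmetric intersections. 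You instead invoke the Bohnenblust--Karlin formula $\sup\Re V(b)=\lim_{t\to 0^+}t^{-1}(\|e+tb\|-1)$, which packages precisely that limiting argument; combined with the symmetry $e\perpr a$ of the Roberts relation, the support-function identity drops out in one line. Your approach is cleaner and makes the paper's final geometric step visibly unnecessary (equal support functions already force $V(a)=-V(a)$), at the cost of citing an external result; the paper's argument is longer but essentially self-contained.
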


\begin{proof}
In the first part of the proof, we shall show that $\textup{Re}\,V(a)=-\textup{Re}\,V(a).$

By \eqref{nula}, $0\in V(a),$ so
$\textup{Re}\,V(a)=V(\textup{Re}\,a)=[\alpha,\beta]$ for some $\alpha\le 0\le \beta.$ Without loss of generality, we may assume that $-\alpha\le \beta.$ (Namely, since $a\perp_R e$ if and only if $-a\perp_R e,$ in the case $-\alpha >\beta$ we can replace $a$ with $-a$ and prove that $\textup{Re}\,V(-a)=-\textup{Re}\,V(-a).$)
By Theorem~3.3.6 of \cite{M}, for every $n\in \Bbb N$ there are $\varphi_n,\psi_n\in S(\A)$ such that
\begin{equation}\label{norma_n}
\|a+n e\|^2=\varphi_n\left((a+ne)^*(a+ne)\right),
\end{equation}
\begin{equation}\label{norma_-n}
\|a-n e\|^2=\psi_n\left((a-ne)^*(a-ne)\right).
\end{equation}

Let $\varphi\in S(\A)$ be such that $\textup{Re}\,\varphi(a)=\beta.$
Then, by \eqref{norma_n}, we have
$$\varphi\left((a+ne)^*(a+ne)\right)\le\|a+n e\|^2=\varphi_n\left((a+ne)^*(a+ne)\right),\quad \forall n\in \Bbb N,$$
that is,
$$\varphi(a^*a)+2n \textup{Re}\, \varphi(a)+n^2\le\varphi_n(a^*a)+2n \textup{Re}\, \varphi_n(a)+n^2,\quad \forall n\in \Bbb N,$$
which implies
\begin{equation}\label{podniz}
2n  \textup{Re}\, (\varphi(a)-\varphi_n(a))\le \varphi_n(a^*a)-\varphi(a^*a),\quad \forall n\in \Bbb N.
\end{equation}
If we suppose that there is $\varepsilon>0$ such that $ \textup{Re}\, (\varphi(a)-\varphi_n(a))>\varepsilon$ for  every $n\in \Bbb N,$ then \eqref{podniz} gives
$$n\le \left|\frac{\varphi_n(a^*a)-\varphi(a^*a)}{2\textup{Re}\, (\varphi(a)-\varphi_n(a))}\right|\le \frac{\|a\|^2}{\varepsilon},\quad \forall n\in \Bbb N,$$
which is impossible. So, there is a subsequence $(\varphi_{n_k}(a))_{k}$ of $(\varphi_n(a))_n$ such that
$$\lim_{k\to \infty}\textup{Re}\,\varphi_{n_k}(a) =\textup{Re}\,\varphi(a).$$
For the simplicity of notation, we shall write $(\varphi_{n}(a))_n$ and $(\psi_{n}(a))_n$ for subsequences $(\varphi_{n_k}(a))_k$ and $(\psi_{n_k}(a))_k,$ respectively.

Further, from \eqref{norma_n}, \eqref{norma_-n} and $a\perp_R e$ it follows
$$\varphi_n\left((a+ne)^*(a+ne)\right)=\|a+ne\|^2=\|a-ne\|^2=\psi_n\left((a-ne)^*(a-ne)\right)$$ for all $n\in \Bbb N,$
wherefrom we get
$$\textup{Re}\,(\varphi_n(a)+\psi_n(a))=\frac{1}{2n}(\psi_n(a^*a)-\varphi_n(a^*a)),\quad \forall n\in \Bbb N.$$
The sequence on the right-hand side of the previous equality converges to 0 when $n\rightarrow\infty$ (since $(\psi_n(a^*a)-\varphi_n(a^*a))_n$ is a bounded sequence), so we have
$$\lim_{n\to\infty}\textup{Re}\,(\varphi_n(a)+\psi_n(a))=0,$$
that is,
\begin{equation}\label{psi}
\lim_{n\to\infty}\textup{Re}\,\psi_n(a)=-\lim_{n\to\infty}\textup{Re}\,\varphi_n(a)=-\textup{Re}\,\varphi(a).
\end{equation}
Since $(\psi_n(a))_n$ is a sequence in the compact set $V(a)$, it has a convergent subsequence (which we again denote by $(\psi_n(a))_n$). Let $\psi\in S(\A)$ be such that $\psi(a)=\lim_{n\rightarrow\infty}\psi_n(a).$ Then, by \eqref{psi}, we get $\textup{Re}\,\psi(a)=-\textup{Re}\,\varphi(a)=-\beta,$ so $-\beta\in \textup{Re}\,V(a)=[\alpha,\beta].$ Thus, $\alpha \le -\beta\le\alpha,$ i.e, $\alpha=-\beta,$ and $\textup{Re}\,V(a)=[-\beta,\beta].$ This means that the orthogonal projection of $V(a)$ onto the real axis
is symmetric with respect to the origin.

The next step is to show that the orthogonal projection of $V(a)$ onto every line passing through the origin is symmetric with respect to the origin.
Let $\theta\in[0,2\pi].$ Then $(e^{-i\theta}a)\perp_R e,$ so the orthogonal projection of $V(e^{-i\theta}a)$ onto the real axis is symmetric with respect to the origin.
Since $V(e^{-i\theta}a)=e^{-i\theta}V(a)$ we conclude that the orthogonal projection of $V(a)$ onto the line with the slope $\tan \theta$ is symmetric with respect to the origin.

In order to prove that $V(a)$ is symmetric with respect to the origin, it suffices to show that the \emph{intersection} of $V(a)$ with an arbitrary line passing through the origin is symmetric with respect to the origin.
It is enough to prove that $V(a)\cap \Bbb R$ has this symmetry property (for other lines we replace $a$ with $e^{-i\theta}a$, as before).

Suppose that $V(a)\cap \Bbb R=[\alpha,\beta]$ for some $\alpha\le 0\le \beta,$ $-\alpha<\beta.$ Since $V(a)$ is a convex set, there is a line $p$ passing through $\alpha$ such that the whole set $V(a)$ is contained in the same halfplane determined by $p$. Let $q$ be the line through the origin perpendicular to $p$ (see Figure~\ref{fig:construction}). It is clear from the construction and the assumption $-\alpha<\beta,$ that the orthogonal projection of $V(a)$ onto $q$ is not symmetric with respect to the origin. Therefore, it has to be $-\alpha\ge\beta$. In the same way we see that the assumption $-\alpha>\beta$ leads to a contradiction. Thus $\alpha =-\beta,$ which completes our proof.

\begin{figure}
\includegraphics[scale=0.25]{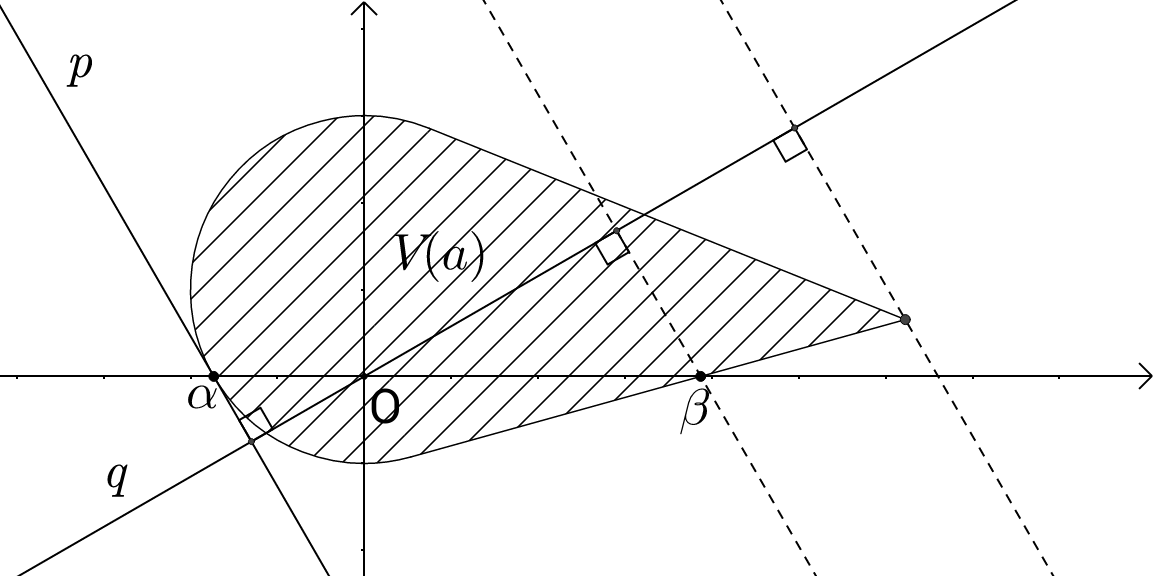}
\caption{Projection of $V(a)$ onto $q$}
\label{fig:construction}
\end{figure}

\end{proof}


We saw that for $a\in\A,$ the symmetry of the numerical range $V(a)$ with respect to the origin is a necessary condition for the Roberts orthogonality $a\perp_R e.$
As we shall see, for some classes of elements in a unital $C^*$-algebra this condition is sufficient as well. This is not true in general, as the following example shows.

\begin{example}
In Example~2~of~\cite{CT} it was shown that the numerical range $W(A)$ of the upper triangular matrix
$$
A =
\begin{bmatrix}
0 & 0 & 2 & 1\\
0 & 1 & 0 & 0\\
0 & 0 & 0 & -1\\
0 & 0 & 0 & 1\\
\end{bmatrix}
$$
is a circular disk centered at the origin. By direct calculation we can check that $\| A + I \|$, rounded to $4$ decimal places, is $2.6918$, while $\| A - I \|$, rounded to $4$ decimal places, is $2.7578$. Therefore $\| A + I \| \neq \| A - I \|$, so $A$ is not Roberts orthogonal to the identity operator $I$.
\end{example}

The Roberts orthogonality $a\perp_R e$ can be completely determined by the geometrical shape of $DV(a).$ Besides the symmetry of $V(a)$ with respect to the origin, an additional condition on $DV(a)$ is required to assure that $a\perp_R e.$ Before stating our main result, we need an auxiliary lemma.

\begin{lemma}\label{separation}
Let $\A$ be a $C^*$-algebra with the unit $e$ and $a\in \mathcal A.$ If $\varphi\in S(\mathcal A)$ is such that $(-\varphi(a),\varphi(a^*a))\not\in DV(a),$ then there exists $\lambda\in \mathbb{C}$ such that either
$$\varphi(a^*a)-2\textup{Re}(\bar{\lambda}\varphi(a))>\psi(a^*a)+2\textup{Re}(\bar{\lambda}\psi(a)), \quad \forall\psi\in S(\mathcal A),$$
or
$$\varphi(a^*a)-2\textup{Re}(\bar{\lambda}\varphi(a))<\psi(a^*a)+2\textup{Re}(\bar{\lambda}\psi(a)), \quad \forall\psi\in S(\mathcal A).$$
\end{lemma}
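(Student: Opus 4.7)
The lemma asserts that the point $p := (-\varphi(a), \varphi(a^*a))$, which by hypothesis lies outside the compact convex set $DV(a) \subset \mathbb{C}\times\mathbb{R}$, can be strictly separated from $DV(a)$ by a real-affine hyperplane whose linear part has the specific form $(z,w) \mapsto w + 2\textup{Re}(\bar\lambda z)$; in other words, a separating hyperplane whose coefficient on the $w$-axis can be normalized to $+1$ (at the cost of possibly flipping the inequality). My plan is to apply the standard hyperplane separation theorem to $p$ and $DV(a)$, and then use a small perturbation to handle the case in which the hyperplane produced happens to be parallel to the $w$-axis.

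Since $DV(a)$ is compact and convex (as established in the introduction) and $p \notin DV(a)$, strict hyperplane separation in $\mathbb{R}^3 \cong \mathbb{C}\times\mathbb{R}$ produces a nonzero real-linear functional $F(z,w) = \gamma w + \textup{Re}(\bar\mu z)$ (with $\gamma\in\mathbb{R}$, $\mu\in\mathbb{C}$) and a scalar $c\in\mathbb{R}$ such that $F(p) > c > F(\psi(a), \psi(a^*a))$ for every $\psi\in S(\A)$. When $\gamma \neq 0$, dividing $F$ by $\gamma$ normalizes the $w$-coefficient to $+1$ (preserving the inequality when $\gamma>0$ and flipping it when $\gamma<0$); setting $\lambda := \mu/(2\gamma)$ then gives precisely one of the two alternative inequalities stated in the lemma.

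The remaining, genuinely nontrivial case is $\gamma=0$, in which $F$ separates only the projections onto $\mathbb{C}$: $\textup{Re}(\bar\mu(-\varphi(a))) > c > \textup{Re}(\bar\mu\psi(a))$ for all $\psi\in S(\A)$. Here I tilt: for $\epsilon>0$, set $F_\epsilon(z,w) := \epsilon w + \textup{Re}(\bar\mu z)$. Weak*-compactness of $S(\A)$ together with continuity of $\psi\mapsto\textup{Re}(\bar\mu\psi(a))$ makes the gap $\delta := \textup{Re}(\bar\mu(-\varphi(a))) - \max_{\psi\in S(\A)}\textup{Re}(\bar\mu\psi(a))$ strictly positive, while $|\psi(a^*a)-\varphi(a^*a)| \leq \|a\|^2$ holds uniformly in $\psi$. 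Hence for any $0<\epsilon<\delta/\|a\|^2$ (the case $a=0$ being vacuous, since then $DV(0)=\{(0,0)\}$ and the hypothesis fails), the estimate $F_\epsilon(p) - F_\epsilon(\psi(a),\psi(a^*a)) \geq \delta - \epsilon\|a\|^2 > 0$ holds uniformly in $\psi$; dividing by $\epsilon$ and setting $\lambda := \mu/(2\epsilon)$ produces the first inequality of the lemma. This vertical case is the main obstacle, and what makes the tilt go through is the combination of compactness of $S(\A)$ (which upgrades strict separation into a positive gap $\delta$) with the uniform bound $\|a\|^2$ on the $w$-coordinate of $DV(a)$.
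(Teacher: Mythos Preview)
Your proof is correct and follows essentially the same route as the paper: apply hyperplane separation in $\mathbb{R}^3\cong\mathbb{C}\times\mathbb{R}$ to $p$ and the compact convex set $DV(a)$, then perturb the coefficient on the $w$-axis so it can be normalized to $1$. The paper condenses your tilting argument to the single remark ``We may perturb $\alpha$ if necessary to assume that $\alpha\neq 0$,'' whereas you spell out explicitly why compactness of $S(\mathcal A)$ and the uniform bound $\psi(a^*a)\le\|a\|^2$ make such a perturbation preserve strict separation.
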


\begin{proof}
First note that for every $b\in\mathcal A$ and $\psi\in S(\mathcal A)$ it holds
$$\textup{Re}\,\psi(b)=\frac{1}{2}(\psi(b)+\overline{\psi(b)})=\frac{1}{2}(\psi(b)+\psi(b^*))=\psi\Big(\frac{1}{2}(b+b^*)\Big)=\psi(\textup{Re}\,b),$$
$$\textup{Im}\,\psi(b)=\frac{1}{2i}(\psi(b)-\overline{\psi(b)})=\frac{1}{2i}(\psi(b)-\psi(b^*))=\psi\Big(\frac{1}{2i}(b-b^*)\Big)=\psi(\textup{Im}\,b).$$
Identifying $\mathbb C\times\mathbb R$ with $\mathbb R^3,$ for each $b\in \mathcal A$ we have
$$DV(b)=\{(\psi(\textup{Re}\,b),\psi(\textup{Im}\,b),\psi(b^*b)):\psi\in S(\mathcal A)\}.$$
Since $(-\varphi(a),\varphi(a^*a))\not\in DV(a),$ that is, $(\varphi(\textup{Re}\,a),\varphi(\textup{Im}\,a),\varphi(a^*a))\not\in DV(-a),$ and $DV(-a)$ is a closed convex set in $\mathbb R^3,$ by the separation theorem there are $\alpha,\beta,\gamma\in\mathbb{R}$ such that either
$$\alpha \varphi(a^*a)-\beta\varphi(\textup{Re}\,a)-\gamma\varphi(\textup{Im}\,a)>\alpha \psi(a^*a)+\beta\psi(\textup{Re}\,a)+\gamma\psi(\textup{Im}\,a), \quad \forall \psi\in S(\mathcal A),$$
or
$$\alpha \varphi(a^*a)-\beta\varphi(\textup{Re}\,a)-\gamma\varphi(\textup{Im}\,a)<\alpha \psi(a^*a)+\beta\psi(\textup{Re}\,a)+\gamma\psi(\textup{Im}\,a), \quad \forall \psi\in S(\mathcal A).$$
We may perturb $\alpha$ if necessary to assume that $\alpha\neq 0.$ Putting $\lambda:=\frac{\beta+\gamma i}{2\alpha}$ the assertion follows.
\end{proof}


Our characterization of the Roberts orthogonality $a\perp_R e$ will be given in terms of the \emph{upper boundary} of $DV(a),$ which is the set
$$DV_{ub}(a)=\{(\mu,r)\in DV(a):\, r=\max\mathcal{L}_\mu(a)\},$$
where
$$\mathcal{L}_\mu(a)=\{\varphi(a^*a):\, \varphi\in S(\mathcal A),\, \varphi(a)=\mu\}.$$
Note that $\mathcal L_\mu(a)$ is a compact subset of $\mathbb R,$ so $\max\mathcal L_\mu(a)$ is well defined. (To see this, let us take an arbitrary sequence $(\varphi_n)_n$ in $S(\mathcal{A})$ such that $\varphi_n(a^*a)\in\mathcal L_\mu(a)$ for every $n\in\mathbb{N}.$ Since $\mathcal A$ is a unital $C^*$-algebra, the set $S(\mathcal A)$ is weak*-compact. Therefore, there exist a subsequence $(\varphi_{n_k})_k$ of $(\varphi_n)_n$ and $\varphi\in S(\mathcal{A})$ such that $\varphi(b)=\lim_{k\rightarrow\infty}\varphi_{n_k}(b)$ for every $b\in\mathcal{A}.$ Then for $b=a$ we obtain $\varphi(a)=\lim_{k\rightarrow\infty}\varphi_{n_k}(a)=\mu$ so $\varphi(a^*a)\in\mathcal L_\mu(a).$ Further, for $b=a^*a$ we get $\varphi(a^*a)=\lim_{k\rightarrow\infty}\varphi_{n_k}(a^*a),$ which shows that $\lim_{k\rightarrow\infty}\varphi_{n_k}(a^*a)\in\mathcal L_\mu(a),$ that is, $\mathcal L_\mu(a)$ is compact.)

Obviously, $(\mu,r)\in DV(a)$ if and only if $(-\mu,r)\in DV(-a),$ and also $\mathcal{L}_{-\mu}(-a)=\mathcal{L}_\mu(a)$, so
\begin{equation}\label{a-a}
    (\mu,r)\in DV_{ub}(a)\Leftrightarrow  (-\mu,r)\in DV_{ub}(-a).
\end{equation}

\begin{theorem}\label{glavni}
Let $\A$ be a $C^*$-algebra with the unit $e.$ For $a\in \mathcal{A}$ the following conditions are mutually equivalent\textup{:}
\begin{itemize}
\item[(i)] $a\perp_R e,$
\item[(ii)] $DV_{ub}(a)=DV_{ub}(-a).$
\end{itemize}
\end{theorem}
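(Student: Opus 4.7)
My plan is to recast $a\perpr e$ as an equality of two sup-like functionals associated with $DV(a)$; each implication then follows from the separation information provided by Lemma~\ref{separation}. Since $(a+\lambda e)^*(a+\lambda e)$ is positive and $\|b\|=\sup_{\psi\in S(\A)}\psi(b)$ for positive $b$ (Theorem~3.3.6 of~\cite{M}), a direct expansion yields
$$\|a\pm\lambda e\|^2=\sup_{(\mu,r)\in DV(a)}\bigl[r\pm 2\textup{Re}(\bar\lambda\mu)\bigr]+|\lambda|^2.$$
Setting $h_a(\lambda):=\sup_{(\mu,r)\in DV(a)}[r+2\textup{Re}(\bar\lambda\mu)]$ and defining $h_{-a}$ analogously for $-a$, the identity $(\mu,r)\in DV(a)\Leftrightarrow(-\mu,r)\in DV(-a)$ gives $h_{-a}(\lambda)=\sup_{(\mu,r)\in DV(a)}[r-2\textup{Re}(\bar\lambda\mu)]$, so condition~(i) is equivalent to $h_a(\lambda)=h_{-a}(\lambda)$ for every $\lambda\in\C$.

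The direction (ii)$\Rightarrow$(i) is then immediate: the functional $(\mu,r)\mapsto r+2\textup{Re}(\bar\lambda\mu)$ is strictly increasing in $r$, so its supremum over $DV(a)$ coincides with its supremum over the upper boundary $DV_{ub}(a)$ (and the same holds for $-a$); hence $DV_{ub}(a)=DV_{ub}(-a)$ forces $h_a=h_{-a}$.

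For (i)$\Rightarrow$(ii), by~\eqref{a-a} it suffices to show $DV_{ub}(a)\subseteq DV_{ub}(-a)$, i.e., for every $(\mu_0,r_0)\in DV_{ub}(a)$ realized by some $\varphi\in S(\A)$, that $(-\mu_0,r_0)\in DV_{ub}(a)$. I proceed in two steps. First, I show $(-\mu_0,r_0)\in DV(a)$: if not, Lemma~\ref{separation} produces $\lambda\in\C$ falling into either the ``upper'' alternative~(A) $r_0-2\textup{Re}(\bar\lambda\mu_0)>r+2\textup{Re}(\bar\lambda\mu)$ for all $(\mu,r)\in DV(a)$, or the reverse strict inequality~(B). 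Case~(A) yields $h_a(\lambda)<r_0-2\textup{Re}(\bar\lambda\mu_0)\le h_{-a}(\lambda)$ (the second bound by evaluating $h_{-a}$ at $\varphi$), contradicting $a\perpr e$. Case~(B) is subtler: Proposition~\ref{slikasimetricna} gives $-\mu_0\in V(a)$, so $r^*:=\max\mathcal{L}_{-\mu_0}(a)$ is well defined, and plugging $(-\mu_0,r^*)\in DV(a)$ into~(B) forces $r^*>r_0$; hence $(-\mu_0,r^*)\in DV_{ub}(a)$ and $(\mu_0,r^*)\notin DV(a)$ (as $r^*>r_0=\max\mathcal{L}_{\mu_0}(a)$), so a second application of Lemma~\ref{separation}, to a state realizing $(-\mu_0,r^*)$, produces $\lambda'\in\C$ whose ``upper'' alternative contradicts $h_a(\lambda')=h_{-a}(\lambda')$ exactly as in~(A), and whose ``lower'' alternative, evaluated at $(\mu_0,r_0)\in DV(a)$, forces $r^*<r_0$, contradicting $r^*>r_0$. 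Second, I show $r_0=\max\mathcal{L}_{-\mu_0}(a)$: if some $r_1\in\mathcal{L}_{-\mu_0}(a)$ exceeded $r_0$, then $(-\mu_0,\max\mathcal{L}_{-\mu_0}(a))\in DV_{ub}(a)$, and applying the first step to this point would place an element of $\mathcal{L}_{\mu_0}(a)$ strictly above $r_0$, contradicting $r_0=\max\mathcal{L}_{\mu_0}(a)$. The principal obstacle is case~(B) of the first step; unlike~(A) it does not directly contradict $h_a=h_{-a}$, and must be broken by bootstrapping to the reflected point $(-\mu_0,r^*)$ on the upper boundary (via Proposition~\ref{slikasimetricna}) and invoking Lemma~\ref{separation} a second time, whereupon the cap $r_0=\max\mathcal{L}_{\mu_0}(a)$ closes the loop.
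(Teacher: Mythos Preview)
Your proof is correct and follows essentially the same route as the paper's: the (ii)$\Rightarrow$(i) direction is identical, and for (i)$\Rightarrow$(ii) your two-step argument---first placing $(-\mu_0,r_0)$ in $DV(a)$ via Lemma~\ref{separation} (with Proposition~\ref{slikasimetricna} feeding the bootstrap in case~(B), followed by a second application of the lemma), then upgrading to the upper boundary---mirrors the paper's structure exactly. The only cosmetic differences are your packaging via the support functions $h_a,h_{-a}$ in place of $\|a\pm\lambda e\|^2$, and that in case~(B) you take $r^*=\max\mathcal{L}_{-\mu_0}(a)$ whereas the paper works with an arbitrary state $\psi$ satisfying $\psi(a)=-\mu_0$; both choices close the same loop of contradictions.
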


\begin{proof}
(i)$\Rightarrow$(ii)  It is enough to prove that $DV_{ub}(a)\subseteq DV_{ub}(-a).$ Namely, since $a\perp_R e$ implies $-a\perp_R e,$ the opposite inclusion follows immediately from the first one.

First we shall prove that
\begin{equation}\label{parodx}
DV_{ub}(a)\subseteq DV(-a).
\end{equation}

Let us take $(\mu, r)\in DV_{ub}(a).$
By definition od $DV_{ub}(a),$ there is $\varphi\in S(\A)$ such that $(\mu, r)=(\varphi(a),\varphi(a^*a)).$ Let us suppose that
$(-\mu, r)=(-\varphi(a),\varphi(a^*a))\not\in DV(a).$ By Lemma~\ref{separation}, there exists $\lambda\in\mathbb{C}$ such that either
\begin{equation}\label{prva}
\varphi(a^*a)-2\textup{Re}(\bar{\lambda}\varphi(a))>\psi(a^*a)+2\textup{Re}(\bar{\lambda}\psi(a)), \quad \forall\psi\in S(\mathcal{A}),
\end{equation}
or
\begin{equation}\label{druga}
\varphi(a^*a)-2\textup{Re}(\bar{\lambda}\varphi(a))<\psi(a^*a)+2\textup{Re}(\bar{\lambda}\psi(a)), \quad  \forall\psi\in S(\mathcal{A}).
\end{equation}

If \eqref{prva} holds, then
$$\begin{array}{rcl}
\|a-\lambda e\|^2 & = & \|(a-\lambda e)^*(a-\lambda e)\|\\
& \ge & \varphi((a-\lambda e)^*(a-\lambda e))\\
& = & \varphi(a^*a)-2\textup{Re}(\bar{\lambda}\varphi(a))+|\lambda|^2\\
& > & \psi(a^*a)+2\textup{Re}(\bar{\lambda}\psi(a))+|\lambda|^2\\
& = & \psi((a+\lambda e)^*(a+\lambda e))
\end{array}$$
for every $\psi\in S(\mathcal{A}).$ From this and Theorem~3.3.6 of \cite{M}, it follows
$$\|a-\lambda e\|^2> \max_{\psi\in S(\mathcal{A})}\psi((a+\lambda e)^*(a+\lambda e))=\|a+\lambda e\|^2,$$
which contradicts the assumption $a\perp_R e.$

Suppose that \eqref{druga} holds. By Proposition~\ref{slikasimetricna}, there is $\psi\in S(\mathcal{A})$ such that $\psi(a)=-\varphi(a)=-\mu.$ Then \eqref{druga} implies
\begin{equation}\label{usporedi}
r=\varphi(a^*a)<\psi(a^*a).
\end{equation}
Let us show that $(-\psi(a),\psi(a^*a))\in DV(a).$ If it is not the case, then by Lemma~\ref{separation} there exists $\alpha\in\mathbb{C}$ such that either
\begin{equation}\label{prvazay}
\psi(a^*a)-2\textup{Re}(\bar{\alpha}\psi(a))>g(a^*a)+2\textup{Re}(\bar{\alpha}g(a)), \quad \forall g\in S(\mathcal A),
\end{equation}
or
\begin{equation}\label{drugazay}
\psi(a^*a)-2\textup{Re}(\bar{\alpha}\psi(a))< g(a^*a)+2\textup{Re}(\bar{\alpha} g(a)), \quad \forall g\in S(\mathcal A).
\end{equation}
In both cases we come to a contradiction: if \eqref{prvazay} holds then, argumenting as in the previous part of the proof, we get $\|a-\alpha e\|>\|a+\alpha e\|$, which contradicts the assumption $a\perp_R e;$ if \eqref{drugazay} holds then, by putting $g:=\varphi,$ we get $\psi(a^*a)<\varphi(a^*a)$, which contradicts \eqref{usporedi}. We conclude that $(-\psi(a),\psi(a^*a))\in DV(a),$ so there is $g\in S(\mathcal A)$ such that $g(a)=-\psi(a)=\mu$ and $g(a^*a)=\psi(a^*a).$ By \eqref{usporedi}, $g(a^*a)=\psi(a^*a)>\varphi(a^*a)=r.$ This contradicts the fact that $(\mu,r)\in DV_{ub}(a).$
Therefore, $(-\mu,r)=(-\varphi(a),\varphi(a^*a))\in DV(a),$ that is, $(\mu,r)\in DV(-a).$ Since this holds for an arbitrary $(\mu, r)\in DV_{ub}(a)$, we have proved \eqref{parodx}.

To finish the proof, take again $(\mu,r)\in DV_{ub}(a).$ By \eqref{parodx}, $(\mu,r)\in DV(-a),$ i.e., $(-\mu,r)\in DV(a).$ Let $\psi\in S(\mathcal A)$ be such that $\psi(a)=-\mu$ and $\psi(a^*a)=r.$
In order to prove that $(-\mu,r)\in DV_{ub}(a),$ we need to show that $r\ge g(a^*a)$ for every $g\in S(\mathcal A)$ such that $g(a)=-\mu.$
If it is not the case, then there exists some $g\in S(\mathcal A)$ such that $g(a)=-\mu$ and
\begin{equation}\label{zaze}
g(a^*a)>\psi(a^*a)=r.
\end{equation}
Since $(-\mu,r)\in DV(a),$ we may assume that $(g(a),g(a^*a))\in DV_{ub}(a).$
Then by \eqref{parodx}, $(g(a),g(a^*a))\in DV(-a),$ so  there is some $h\in S(\mathcal A)$ such that $h(a)=-g(a)=\mu$ and $h(a^*a)=g(a^*a).$ Since $(\mu,r)\in DV_{ub}(a)$ and $h(a)=\mu,$ it holds $h(a^*a)\le r.$ Then $g(a^*a)=h(a^*a)\le r$ which is a contradiction with \eqref{zaze}. This completes the proof that $(-\mu,r)\in DV_{ub}(a),$ i.e., $(\mu,r)\in DV_{ub}(-a).$ Since $(\mu,r)\in DV_{ub}(a)$ was arbitrary, we conclude that $DV_{ub}(a)\subseteq DV_{ub}(-a).$

(ii)$\Rightarrow$(i) For every $\lambda \in\mathbb{C}$ we have
$$\begin{array}{rcl}
\|a+\lambda e\|^2 & = & \|(a+\lambda e)^*(a+\lambda e)\|\\
& = & \sup\{\varphi((a+\lambda e)^*(a+\lambda e)):\varphi \in S(\mathcal A)\} \\
& = & \sup\{\varphi(a^*a)+2\textup{Re}(\bar{\lambda}\varphi(a))+|\lambda|^2:\varphi \in S(\mathcal A)\}\\
& = & \sup\{r+2\textup{Re}(\bar{\lambda}\mu)+|\lambda|^2:(\mu,r) \in DV_{ub}(a)\}\\
&\stackrel{\eqref{a-a}}{=} & \sup\{r+2\textup{Re}(\bar{\lambda}\mu)+|\lambda|^2:(-\mu,r) \in DV_{ub}(-a)\}\\
&\stackrel{\textup{(ii)}}{=} & \sup\{r+2\textup{Re}(\bar{\lambda}\mu)+|\lambda|^2:(-\mu,r) \in DV_{ub}(a)\}\\
&= & \sup\{r-2\textup{Re}(\bar{\lambda}\mu)+|\lambda|^2:(\mu,r) \in DV_{ub}(a)\}\\
& = & \sup\{\varphi(a^*a)-2\textup{Re}(\bar{\lambda}\varphi(a))+|\lambda|^2:\varphi \in S(\mathcal A)\}\\
& = & \sup\{\varphi((a-\lambda e)^*(a-\lambda e)):\varphi \in S(\mathcal A)\} \\
& = & \|(a-\lambda e)^*(a-\lambda e)\|\\
& = & \|a-\lambda e\|^2.
\end{array}$$
Therefore, $a\perp_R e.$
\end{proof}

Let us now consider some special classes of elements $a\in \A$ for which the symmetry of $V(a)$ with respect to the origin is a sufficient condition for the Roberts orthogonality to $e.$

\begin{prop}\label{isometry-normal} Let $\A$ be a $C^*$-algebra with the unit $e.$
\begin{itemize}
\item[(i)] If $a\in \A$ is an isometry, then  $a\perp_R e$ if and only if $V(a)=-V(a)$.
\item[(ii)]
If $a\in \A$ is normal, then  $a\perp_R e$ if and only if $V(a)=-V(a)$.
\item[(iii)] If $a\in \A$ is self-adjoint, then $a\perp_R e$ if and only if $\pm \|a\|\in\sigma(a).$
\end{itemize}
\end{prop}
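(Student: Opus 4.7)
The plan is to use Proposition~\ref{slikasimetricna} for the ``only if'' halves of (i) and (ii), which are immediate, and to focus on the converses; part (iii) will then follow from (ii) together with an elementary description of $V(a)$ for self-adjoint $a.$

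For (i), the key observation is that $a^*a = e$ forces $\varphi(a^*a)=1$ for every $\varphi\in S(\A).$ Hence $DV(a)=V(a)\times\{1\},$ and because the second coordinate is constant, $DV_{ub}(a)=DV(a);$ similarly $DV_{ub}(-a)=-V(a)\times\{1\}.$ Theorem~\ref{glavni} then reads $a\perp_R e\Leftrightarrow V(a)\times\{1\}=-V(a)\times\{1\}\Leftrightarrow V(a)=-V(a).$

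For (ii), rather than routing through Theorem~\ref{glavni} with its upper-boundary bookkeeping, I would give a direct spectral argument. For each $\lambda\in\C$ the element $a+\lambda e$ is again normal, so its norm equals its spectral radius:
$$\|a+\lambda e\|=\sup\{|z+\lambda|:z\in\sigma(a)\}.$$
The function $z\mapsto|z+\lambda|$ is convex and $\sigma(a)$ is compact, so the supremum is unchanged when $\sigma(a)$ is replaced by its closed convex hull, which for normal $a$ is exactly $V(a)$ by the formula recalled in the preliminaries. Hence $\|a+\lambda e\|=\sup\{|z+\lambda|:z\in V(a)\}.$ Assuming $V(a)=-V(a)$ and substituting $z\leftrightarrow -z$ under the supremum yields $\|a+\lambda e\|=\|a-\lambda e\|,$ i.e.\ $a\perp_R e.$

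For (iii), self-adjoint elements are normal, so part (ii) applies and the task reduces to showing $V(a)=-V(a)\Leftrightarrow\pm\|a\|\in\sigma(a).$ Since $\sigma(a)\subseteq\mathbb{R}$ and $V(a)=\overline{\textup{conv}(\sigma(a))}=[\min\sigma(a),\max\sigma(a)],$ with $\|a\|=\max\{|z|:z\in\sigma(a)\},$ symmetry of $V(a)$ about the origin is equivalent to $\max\sigma(a)=-\min\sigma(a),$ and together with $\sigma(a)\subseteq[-\|a\|,\|a\|]$ this is the same as $\{-\|a\|,\|a\|\}\subseteq\sigma(a).$ The only subtle step in the whole argument is the spectral reduction in (ii), which uses two standard facts about normal elements of a $C^*$-algebra — norm equals spectral radius, and $V(a)=\overline{\textup{conv}(\sigma(a))}$ — both already cited in the introduction.
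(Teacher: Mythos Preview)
Your argument is correct. Parts (i) and (iii) coincide with the paper's proof almost verbatim: the paper also observes $DV_{ub}(a)=V(a)\times\{1\}$ for an isometry and then invokes Theorem~\ref{glavni}, and for (iii) it likewise reduces to $V(a)=[m,M]$ with $m=-\|a\|$ or $M=\|a\|$ and applies (ii).

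In (ii), however, your route differs from the paper's. The paper does not pass through the spectrum at all; instead it uses the state-space fact (Theorem~3.3.6 of \cite{M}) that for a normal element $b$ there is $\varphi\in S(\A)$ with $\|b\|=|\varphi(b)|$, applies this to $b=a+\lambda e$, and then uses $V(a)=-V(a)$ to find $\psi$ with $\psi(a)=-\varphi(a)$, yielding $\|a+\lambda e\|=|\psi(a-\lambda e)|\le\|a-\lambda e\|$. Your approach replaces this by the spectral-radius formula $\|a+\lambda e\|=\sup\{|z+\lambda|:z\in\sigma(a)\}$ and the convexity of $z\mapsto|z+\lambda|$ to pass to $V(a)=\overline{\textup{conv}(\sigma(a))}$. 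Both arguments are short; yours is perhaps more elementary in that it avoids the specific norm-attaining-state result, while the paper's has the advantage of staying entirely within the numerical-range language used throughout the article and not needing the auxiliary convexity step.
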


\begin{proof} (i) Since $a^*a=e,$ we have
$$DV_{ub}(a)=DV(a)=\{(\varphi(a),1): \varphi\in S(\mathcal{A})\}=V(a)\times\{1\},$$
so $DV_{ub}(a)=DV_{ub}(-a)$ if and only if $V(a)=V(-a).$  The statement now follows from Theorem~\ref{glavni}.

(ii) By Proposition~\ref{slikasimetricna}, $a\perp_R e$ implies $V(a)=-V(a).$

Let us prove the opposite direction. Suppose that $V(a)=-V(a).$ Take $\lambda\in \Bbb C.$
Since $a$ is normal, by Theorem 3.3.6 of \cite{M}, there is $\varphi_\lambda\in S(\mathcal{A})$ such that $\|a+\lambda e\|=|\varphi_\lambda(a+\lambda e)|.$
Since $\varphi_\lambda (a)\in V(a)=-V(a),$ there is $\psi_\lambda\in S(\mathcal{A})$ such that $\varphi_\lambda(a)=-\psi_\lambda (a).$ Then we have
$$\varphi_\lambda(a+\lambda e)=\varphi_\lambda(a)+\lambda =-\psi_\lambda (a)+\lambda =-\psi_\lambda (a-\lambda e)$$
from which it follows that
\begin{equation}\label{stanjelambda}
\|a+\lambda e\|=|\varphi_\lambda (a+\lambda e)|=|\psi_\lambda (a-\lambda e)|\le \|a-\lambda e\|.
\end{equation}
Since $\lambda\in\mathbb{C}$ was arbitrarily chosen, it follows that $\|a-\lambda e\|\le \|a+\lambda e\|.$ Hence $\|a+\lambda e\|=\|a-\lambda e\|$ for all $\lambda\in\mathbb{C},$ that is $a\perp_R e.$

(iii) If $a\in A$ is self-adjoint, then at least one of the numbers $-\|a\|,\|a\|$ belongs to $\sigma(a).$ Thus
$$V(a)=\overline{\textup{conv}(\sigma(a))}=[m,M]\subseteq [-\|a\|,\|a\|]$$ where $m=-\|a\|$ or $M=\|a\|.$ By (ii), $a\perp_R e$ if and only if $m=-M,$ that is, if and only if $\pm \|a\|\in\sigma(a).$
\end{proof}

As a consequence of Proposition~\ref{isometry-normal}, we have the following result.

\begin{cor}
Let $\A$ be a $C^*$-algebra with the unit $e,$ and $a\in\A$ a self-adjoint element.
\begin{itemize}
\item[(i)] There exists $\lambda\in \Bbb R$ such that $(a-\lambda e)\perp_R e.$
\item[(ii)] If $\|a+\lambda_0 e\|=\|a-\lambda_0 e\|$ for some $\lambda_0 \in \Bbb R \setminus \{0\},$ then $a\perp_R e.$
\end{itemize}
\end{cor}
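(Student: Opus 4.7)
The plan is to reduce both parts to Proposition~\ref{isometry-normal}(iii), which characterizes $b\perp_R e$ for self-adjoint $b$ by $\pm\|b\|\in\sigma(b)$. The bridge is an explicit spectral formula for the norm of $a+\mu e$ when $a$ is self-adjoint and $\mu\in\mathbb{R}$, which I would establish up front and then apply twice.

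First I would fix $a=a^*\in\mathcal{A}$ and set $m=\min\sigma(a)$, $M=\max\sigma(a)$, $c=(m+M)/2$, $r=(M-m)/2$. Since $a+\mu e$ is self-adjoint its norm equals its spectral radius, so $\|a+\mu e\|=\max_{t\in\sigma(a)}|t+\mu|$. Because $|\,\cdot+\mu|$ is convex and $m,M\in\sigma(a)$, this maximum is $\max(|m+\mu|,|M+\mu|)$. A short computation gives the key identity
$$\|a+\mu e\|=|c+\mu|+r\qquad(\mu\in\mathbb{R}).$$

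For (i) I would take $\lambda=c$. Then the spectrum of $a-\lambda e$ is contained in $[-r,r]$ and contains the endpoints $\pm r$; since $\|a-\lambda e\|=r$ by the identity above (with $\mu=-c$, giving $0+r$), we have $\pm\|a-\lambda e\|\in\sigma(a-\lambda e)$, and Proposition~\ref{isometry-normal}(iii) yields $(a-\lambda e)\perp_R e$.

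For (ii), applying the identity with $\mu=\pm\lambda_0$ turns the hypothesis $\|a+\lambda_0 e\|=\|a-\lambda_0 e\|$ into $|c+\lambda_0|=|c-\lambda_0|$; since $\lambda_0\neq 0$, this forces $c=0$, i.e.\ $m=-M$. Hence $\|a\|=M\in\sigma(a)$ and $-\|a\|=m\in\sigma(a)$, and Proposition~\ref{isometry-normal}(iii) gives $a\perp_R e$. No step here is a serious obstacle; the only thing to be careful about is justifying the passage from $\max_{t\in\sigma(a)}|t+\mu|$ to $\max(|m+\mu|,|M+\mu|)$, for which the convexity of the absolute value together with the fact that the extreme points $m,M$ of $[m,M]$ actually lie in $\sigma(a)$ is the clean argument.
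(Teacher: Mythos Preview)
Your proof is correct. Both you and the paper take the midpoint of $\sigma(a)$ for (i) and reduce (ii) to showing $m=-M$; the difference is in packaging. The paper invokes Proposition~\ref{isometry-normal}(ii) (the normal case, via $V(a)=-V(a)$) and, for (ii), splits into the two cases $M=\|a\|$ and $m=-\|a\|$, computing $\|a\pm\lambda_0 e\|$ separately in each. You instead prove the single identity $\|a+\mu e\|=|c+\mu|+r$ up front and then appeal to Proposition~\ref{isometry-normal}(iii); this collapses the case analysis in (ii) into the one-line implication $|c+\lambda_0|=|c-\lambda_0|\Rightarrow c=0$. Your route is a bit more economical, while the paper's version stays closer to the numerical-range language used elsewhere in the article.
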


\begin{proof}
Since $a$ is self-adjoint, $V(a)=\overline{\textup{conv}(\sigma(a))}=[m,M]\subseteq [-\|a\|,\|a\|]$ where $m=-\|a\|$ or $M=\|a\|.$

(i) If $\lambda$ is the midpoint of the segment $V(a),$ then $V(a-\lambda e)=V(a)-\lambda$ is a symmetric set with respect to the origin so, by the statement (ii) of Proposition~\ref{isometry-normal}, it follows that $(a-\lambda e)\perp_R e.$

(ii) By Proposition~\ref{isometry-normal}(ii), it suffices to show that $m=-M$. Obviously, we can assume that $\lambda_0 > 0.$

Let us first consider the case $M=\|a\|.$ Then $$V(a+\lambda_0 e)=[m+\lambda_0,\|a\|+\lambda_0], \quad V(a-\lambda_0e)=[m-\lambda_0,\|a\|-\lambda_0].$$ Thus we have $\|a+\lambda_0 e\|=\|a\|+\lambda_0,$ and
$\|a-\lambda_0 e\|=\lambda_0-m$ or $\|a-\lambda_0 e\|=\|a\|-\lambda_0,$
from which it follows by the assumption that
$\|a\|+\lambda_0=\lambda_0-m$ or $\|a\|+\lambda_0=\|a\|-\lambda_0.$ Since $\lambda_0>0,$ the case $\|a\|+\lambda_0=\|a\|-\lambda_0$ is impossible. Therefore, $\|a\|+\lambda_0=\lambda_0-m,$ that is, $m=-\|a\|=-M.$

It remains to consider the case $m=-\|a\|.$ Then
$$V(a+\lambda_0 e)=[-\|a\|+\lambda_0, M+\lambda_0], \quad V(a-\lambda_0e)=[-\|a\|-\lambda_0, M-\lambda_0],$$
whereform $\|a-\lambda_0 e\|=\|a\|+\lambda_0,$ and $\|a+\lambda_0 e\|=\|a\|-\lambda_0$ or $\|a+\lambda_0 e\|=M+\lambda_0.$ By the assumption, we now have $\|a\|+\lambda_0=\|a\|-\lambda_0$ or $\|a\|+\lambda_0=M+\lambda_0.$ Since $\lambda_0>0,$ the case $\|a\|+\lambda_0=\|a\|-\lambda_0$ is impossible. Therefore, $\|a\|+\lambda_0=M+\lambda_0,$ that is, $M=\|a\|=-m.$ This completes our proof.
\end{proof}

If $A\in B(H)$ is a linear operator acting on a complex Hilbert space $H,$ where $3\le \textup{dim\,}H<\infty,$ then $DV(A)=DW(A).$ By Theorem~\ref{glavni}, $A\perp_R I$ if and only if $DV_{ub}(A)=DV_{ub}(-A)$ where, in this case,
$$DV_{ub}(A)=\{(\mu,r)\in DW(A):\, r=\max\mathcal{L}_\mu(A)\},$$
$$\mathcal{L}_{\mu}(A)=\{(A^*Ax,x):\, x\in H, \|x\|=1, (Ax,x)=\mu\}.$$
We conclude our paper with an additional description of the case $A\perp_R I,$ when $A\in B(H)$ is a linear operator acting on a two-dimensional complex Hilbert space $H.$ If $\alpha$ and $\beta$ are eigenvalues of $A$ then, by the elliptical range theorem (see e.g. \cite{Don} or \cite{L}), the numerical range $W(A)$ is an elliptical disc (possibly degenerate) centered at $\frac{1}{2}\textup{tr} (A)$ with foci $\alpha$ and $\beta$ and the minor axis length equal to $\sqrt{\textup{tr}(A^*A)-|\alpha|^2-|\beta|^2}.$ The Davis--Wielandt shell $DW(A)$ is an ellipsoid without the interior centered at $\Big(\frac{\textup{tr}(A)}{2}, \frac{\textup{tr}(A^*A)}{2}\Big)$ with a (vertical) principal axis
$$\bigg\{\bigg(\frac{\textup{tr}(A)}{2},r\bigg):\, r\ge 0,\, \bigg|r-\frac{1}{2}\textup{tr}(A^*A)\bigg|\le \bigg\|A^*A-\frac{1}{2}\textup{tr}(A^*A)I\bigg\|\bigg\}$$
(\cite[Theorem~10.1]{D2}, see also \cite[Theorem~2.2]{LPS}). ($\textup{tr}(T)$ stands for the trace of $T\in B(H)$ with respect to some fixed orthonormal basis of $H.$)

\begin{prop}\label{zamale}
Let $A\in B(H),$ $\textup{dim}\,H=2.$ Then the following conditions are mutually equivalent\textup{:}
\begin{itemize}
\item[(i)] $A\perp_R I,$
\item[(ii)] $W(A)=-W(A),$
\item[(iii)] $\textup{tr}(A)=0.$
\end{itemize}
\end{prop}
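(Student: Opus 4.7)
The plan is to prove the cycle (i)$\Rightarrow$(ii)$\Rightarrow$(iii)$\Rightarrow$(i). The implication (i)$\Rightarrow$(ii) should fall out immediately from Proposition~\ref{slikasimetricna}, which already yields $V(A)=-V(A)$; since the unit sphere of the finite-dimensional $H$ is compact, $W(A)$ is already closed and coincides with $V(A)$. For (ii)$\Leftrightarrow$(iii) I would invoke the elliptical range theorem recalled just above the proposition statement: $W(A)$ is an elliptical disc, possibly degenerated to a segment or to a single point, centred at $\tfrac12\operatorname{tr}(A)$. An ellipse and each of its degenerate forms is centrally symmetric about, and only about, its own centre, so $W(A)=-W(A)$ forces $\tfrac12\operatorname{tr}(A)=0$, and the converse is immediate.

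The substance of the proposition is (iii)$\Rightarrow$(i), and the natural route is a Schur triangularization. In a suitable orthonormal basis of $H$, $A$ is upper triangular with diagonal entries summing to $\operatorname{tr}(A)=0$, so
$$A=\begin{pmatrix}\alpha & \gamma\\ 0 & -\alpha\end{pmatrix}$$
for some $\alpha,\gamma\in\mathbb{C}$. I would then form the $2\times 2$ positive matrix $M_\lambda:=(A+\lambda I)^*(A+\lambda I)$ and read off from a short entrywise computation that
$$\operatorname{tr}(M_\lambda)=|\alpha+\lambda|^2+|\gamma|^2+|\alpha-\lambda|^2=2|\alpha|^2+2|\lambda|^2+|\gamma|^2,$$
$$\det(M_\lambda)=|\alpha+\lambda|^2|\alpha-\lambda|^2=|\alpha^2-\lambda^2|^2,$$
both of which are plainly invariant under $\lambda\mapsto -\lambda$. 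Since the spectrum of a $2\times 2$ positive matrix is determined by its trace and determinant, $M_\lambda$ and $M_{-\lambda}$ must have the same largest eigenvalue, and hence $\|A+\lambda I\|=\|A-\lambda I\|$ for every $\lambda\in\mathbb{C}$, which is exactly $A\perp_R I$.

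I do not anticipate any real obstacle: the first two implications are bookkeeping once Proposition~\ref{slikasimetricna} and the elliptical range theorem are in hand, and the last implication collapses, via Schur's theorem, to the two-line trace/determinant calculation above. The only point deserving a brief word is the degenerate cases of the elliptical range theorem (when $A$ is normal, so that $W(A)$ is a segment or a singleton); in each case $\tfrac12\operatorname{tr}(A)$ remains the unique centre of symmetry, so the equivalence (ii)$\Leftrightarrow$(iii) is unaffected.
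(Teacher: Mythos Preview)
Your argument is correct. The implications (i)$\Rightarrow$(ii) and (ii)$\Leftrightarrow$(iii) are handled exactly as in the paper, via Proposition~\ref{slikasimetricna} and the elliptical range theorem respectively; your remark about the degenerate cases is appropriate and harmless.

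Where you diverge is in closing the cycle. The paper proves (ii)$\Rightarrow$(i) by an external citation: from $W(A+\lambda I)=W(-A+\lambda I)$ it invokes Theorem~3.1 of Li--Poon--Sze~\cite{LPS} to produce, for each $\lambda$, a unitary $U_\lambda$ with $A+\lambda I=U_\lambda^*(-A+\lambda I)U_\lambda$, whence the norms agree. Your route instead proves (iii)$\Rightarrow$(i) by Schur-triangularizing $A$ to $\begin{pmatrix}\alpha&\gamma\\0&-\alpha\end{pmatrix}$ and observing that $\operatorname{tr}\bigl((A+\lambda I)^*(A+\lambda I)\bigr)$ and $\det\bigl((A+\lambda I)^*(A+\lambda I)\bigr)=|\det(A+\lambda I)|^2$ are both even in $\lambda$, so the two positive $2\times 2$ matrices $M_{\pm\lambda}$ share a spectrum and in particular a largest eigenvalue. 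This is a genuinely more elementary and self-contained argument: it avoids the dependence on~\cite{LPS} at the cost of a short explicit calculation, and it makes transparent \emph{why} the two-dimensional case is special (the characteristic polynomial of $M_\lambda$ has only two coefficients to control). The paper's approach, by contrast, buys a cleaner one-line proof once the cited structural result is granted.
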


\begin{proof}
(i)$\Rightarrow$(ii) Since $W(A)=V(A)$ in this case, it follows from Proposition~\ref{slikasimetricna} that $A\perp_R I$ implies $W(A)=-W(A).$

(ii)$\Rightarrow$(i) Assume that $W(A)=-W(A).$ Then for every $\lambda\in \Bbb C$ it holds $W(A+\lambda I)=W(-A+\lambda I)$ so, by Theorem~3.1 of \cite{LPS}, it follows that there is a unitary $U_\lambda\in B(H)$ such that $A+\lambda I=U_\lambda^*(-A+\lambda I)U_\lambda.$ In particular, $\|A+\lambda I\|=\|U_\lambda^*(-A+\lambda I)U_\lambda\|=\|A-\lambda I\|,$ that is, $A\perp_R I.$

(iii)$\Leftrightarrow$(ii) It follows from the fact that $W(A)$ is an elliptical disc centered at $\frac{1}{2}\textup{tr}(A).$
\end{proof}

\section*{Acknowledgements} This work has been fully supported by the Croatian Science Foundation under the project IP-2016-06-1046.

\end{document}